 \newtheorem{theorem}{Theorem}
\newtheorem*{theoremA}{Theorem A}
\newtheorem*{theoremB}{Theorem B}
\newtheorem*{theoremC}{Theorem C}
\newtheorem*{theoremD}{Theorem D}
\newtheorem{lemma}{Lemma}
\numberwithin{equation}{section}
\title{The estimation of the length of a convex curve in two-dimensional Alexandrov space}
\author{Alexander ~A.~Borisenko }
\begin{document}
\date{}
\maketitle

\begin{center}
 It is proved the generalization of Toponogov theorem about the length of the curve
  in two-dimensional Riemannian manifolds in the case of two-dimensional Alexandrov spaces. \\[1pt]
  \end{center}
  
\hangindent=2cm\hangafter=-2\noindent
{\it Keywords}: $\lambda$-convex curve, two-dimensional Alexandrov space \\[1pt]
 {\it MSC}: 53C44, 52A40\\[3pt]

Let $R$ be an Alexandrov space of curvature $\ge c$ homeomorphic to a disc 
(see~\cite[p.~308]{Alex06}). 
Suppose $G$ is a domain in $R$ that is  bounded by a rectifiable curve $\gamma$.
Denote by $\tau(\gamma_1)$ the integral geodesic curvature (the swerve) of a subarc $\gamma_1$ of $\gamma$~\cite[p.~309]{Alex06}.
A curve $\gamma$ is called \textbf{ $\lambda$-convex } with $\lambda>0$ if 
any subarc $\gamma_1$ of  $\gamma$ satisties 
  \begin{equation}\label{def_lambda_convex}
  \frac{\tau(\gamma_1)}{s(\gamma_1)} \ge \lambda > 0, 
  \end{equation}
 where $s(\gamma_1)$ is the length of    $\gamma_1$.
 For regular curves in a two-dimensional Riemannian manifold 
this condition is equivalent to the assumption 
that the geodesic curvature at each point of this curve is $\ge \lambda > 0$.
In the general case the condition (\ref{def_lambda_convex}) allows $\gamma$ to have a corner points. 

 We prove the following theorem. 
\begin{theorem}\label{main}
Let $G$ be a domain homeomorphic to a disc and $G$ lies in a two-dimensional Alexandrov space of curvature $\ge c$
(in the sense of Alexandrov). 
\begin{itemize}
\item[\textnormal{I.}]
If the boundary curve $\gamma$ of $G$ is $\lambda$-convex and $c +\lambda^2 > 0$, then
the length $s(\gamma)$ of $\gamma$ satisfies
 \begin{enumerate}
 \item for $c = 0$  \begin{equation}\label{c_0}  s(\gamma) \le \frac{2\pi}{\lambda}; \notag   \end{equation}
 \item for $c > 0$  \begin{equation}\label{c_greater_0}  s(\gamma) \le \frac{  2\pi \sqrt{c} }{  \sqrt{c +\lambda^2}   };  
 \notag \end{equation}
 \item for $c < 0$  \begin{equation}\label{c_less_0}   s(\gamma) \le \frac{  2\pi \sqrt{-c} }{  \sqrt{c +\lambda^2}   }.  
 \notag \end{equation}
 \end{enumerate}
 \item[\textnormal{II.}]
All these  inequalities  attain equalities  if and only if the domain $G$ is a disc on the plane of constant curvature $c$. 
\end{itemize}
 \end{theorem}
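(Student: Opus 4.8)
\emph{Plan.} The plan is to combine the Gauss--Bonnet formula for two-dimensional manifolds of bounded curvature with a monotonicity estimate along the family of inner equidistant curves of $\gamma$. Throughout, $\omega$ denotes the curvature measure and $\mu$ the area; ``curvature $\ge c$'' means $\omega(E)\ge c\,\mu(E)$ for every Borel set $E$, and Gauss--Bonnet for the disc $G$ reads $\tau(\gamma)=2\pi-\omega(G)$. In the case $c=0$ this already disposes of part~I: since $\omega(G)\ge 0$, the $\lambda$-convexity gives $\lambda\,s(\gamma)\le\tau(\gamma)=2\pi-\omega(G)\le 2\pi$.

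For arbitrary $c$ I would proceed as follows. Put $G_t=\{x\in G:\operatorname{dist}(x,\gamma)\ge t\}$, $\gamma_t=\partial G_t$, $L(t)=\operatorname{length}(\gamma_t)$, for $t\in[0,T]$, where $T$ is the inradius of $G$. First I would record the needed structural facts (all within the theory of~\cite{Alex06}): $G$ is convex; since $c+\lambda^2>0$, each $\gamma_t$ is again $\lambda$-convex, in particular a convex curve, so $\tau(\gamma_t)\ge 0$; $G_t$ shrinks to a single point as $t\uparrow T$ (a $\lambda$-convex curve contains no geodesic subarc, so the set of deepest points is a point), hence $L(T)=0$ and $\tau(\gamma_T)=2\pi$; and the first-variation relations $\frac{d}{dt}\mu(G_t)=-L(t)$, $L'(t)=-\tau(\gamma_t)$ hold almost everywhere, while $\tau(\gamma_t)=2\pi-\omega(G_t)$ for every $t$.

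The heart of the matter is that
\[
 Q(t):=\tau(\gamma_t)^2+c\,L(t)^2
\]
is non-decreasing on $[0,T]$. Indeed, for $s<t$ the shell $G_s\setminus G_t$ has area $\int_s^t L$, so $\tau(\gamma_t)-\tau(\gamma_s)=\omega(G_s)-\omega(G_t)\ge c\int_s^t L$; writing $T_1(t)=\tau(\gamma_t)\ge 0$ this reads $dT_1\ge cL\,dt$, and since $dL=-T_1\,dt$ and the jumps of $T_1$ (caused by cone points, whose angle is $\le 2\pi$, hence of positive curvature) only increase $Q$, the Stieltjes differential satisfies $dQ\ge 2T_1\,(dT_1-cL\,dt)\ge 0$. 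Therefore $Q(0)\le Q(T)=4\pi^2$, that is $\tau(\gamma)^2+c\,s(\gamma)^2\le 4\pi^2$; combined with $\tau(\gamma)\ge\lambda\,s(\gamma)$ this gives $(c+\lambda^2)\,s(\gamma)^2\le 4\pi^2$, the asserted estimate in each of the three cases. For part~II, equality forces $\tau(\gamma)=\lambda\,s(\gamma)$ and $Q\equiv 4\pi^2$: by the integral definition of the swerve the first means $\gamma$ has no corners and constant geodesic curvature $\lambda$; the second means that along each $\gamma_t$ the curvature equals $c\,L(t)$ with no cone points and no singular part, i.e.\ $\omega=c\,\mu$ on $G$. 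Hence $G$ is a disc of constant curvature $c$ bounded by a circle of geodesic curvature $\lambda$, so it is isometric to the corresponding disc in the $c$-plane; conversely that disc realizes equality by a direct computation.

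I expect the main obstacle to be the second paragraph: justifying, in the non-smooth Alexandrov setting, that the inner parallels $\gamma_t$ remain $\lambda$-convex rectifiable curves which collapse to a point and obey the first-variation identities, and then handling the (singular but sign-definite) parts of $t\mapsto\tau(\gamma_t)$ carefully enough to make the monotonicity of $Q$ rigorous. The case $c<0$ is the delicate one, and it is precisely where $c+\lambda^2>0$ is used: it keeps the geodesic curvature of the inner parallels above $\lambda$ (heuristically, the Riccati relation $\kappa_t'\ge\kappa_t^2+c$ gives $\kappa_t'>0$ as long as $\kappa_t\ge\lambda>\sqrt{-c}$).
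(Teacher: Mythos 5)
Your route is genuinely different from the paper's (which doubles $G$ along $\gamma$, embeds the resulting sphere by Alexandrov's theorem, uses Pogorelov--Milka rigidity to flatten $\gamma$ into a plane curve, and then applies Blaschke-type rolling lemmas), but it has a genuine gap at its heart, and precisely in the case you did not flag. The monotonicity of $Q(t)=\tau(\gamma_t)^2+c\,L(t)^2$ is derived from the exact first-variation identity $dL=-\tau(\gamma_t)\,dt$. That identity fails as soon as the inner parallels have corners, which is the generic situation here: $\gamma$ itself is allowed corners, and even for a smooth strictly convex $\gamma$ the equidistants develop corners upon reaching the cut locus. What holds in general is only the inequality $dL\le-\tau(\gamma_t)\,dt$, with a deficit of the form $\sum_i\bigl(2\tan(\alpha_i/2)-\alpha_i\bigr)\,dt$ at corners of exterior angles $\alpha_i$. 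For $c\le 0$ this deficit enters $dQ=2\tau\,d\tau+2cL\,dL$ with the favorable sign, so your argument can be repaired there; for $c>0$ it enters with the wrong sign and the monotonicity is simply false. Concretely, on the round sphere ($c=1$, $\omega=\mu$) Gauss--Bonnet gives $d\tau=L\,dt$ exactly, so there is no slack to absorb the corner terms, and one computes
\begin{equation}
Q'(t)=-2L(t)\sum_i\bigl(2\tan(\alpha_i/2)-\alpha_i\bigr)<0 \notag
\end{equation}
whenever $\gamma_t$ has a corner (e.g.\ for a lens bounded by two circular arcs, or for any non-circular oval once its equidistants hit the medial axis). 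So $Q$ decreases, which is the opposite of what you need to conclude $Q(0)\le Q(T)=4\pi^2$.

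The target inequality itself, $\tau(\gamma)^2+c\,s(\gamma)^2\le 4\pi^2$, is true, but for $c>0$ it is (via $\tau=2\pi-\omega(G)$) exactly an isoperimetric-type inequality --- on the unit sphere it is $L^2\le A(4\pi-A)$ --- so your plan in that case silently reduces the theorem to an isoperimetric inequality in Alexandrov spaces which the equidistant computation does not prove. In addition, the equality discussion in part II inherits this gap, and the structural facts you postpone (BV behaviour of $L(t)$ and $\tau(\gamma_t)$, collapse of $G_t$ to a point, validity of the coarea and variation formulas for equidistants in manifolds of bounded curvature) are a real, if standard, burden. A small further point: the bound you obtain, $s(\gamma)\le 2\pi/\sqrt{c+\lambda^2}$, coincides with the three displayed estimates only for $c=0,\pm1$; it is in fact the sharp form for general $c$ (the paper's own computation with circles is normalized to $|c|=1$), so this is not an error on your side, but you should not claim it is literally ``the asserted estimate in each of the three cases.''
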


This theorem is the generalization of Toponogov theorem~\cite{Top63} about the length of the curve
  in two-dimensional Riemannian manifolds.
We need the following statements to prove Theorem~\ref{main}.
 
\begin{theoremA}\textnormal{ (A.~D.~Alexandrov~\cite[p.~269]{Alex06}) }
A metric space with inner metric of curvature $\ge c$ homeomorphic to a sphere is isometric to
 a closed convex surface in a simply connected space of constant curvature $c$.
 \end{theoremA}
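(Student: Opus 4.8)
The plan is to establish Theorem~A by Alexandrov's classical two-stage strategy: first settle the \emph{polyhedral} case by a continuity (mapping-degree) argument, and then recover the general statement by approximation together with a compactness limit. Throughout write $M_c^3$ for the simply connected three-dimensional space form of constant curvature $c$, so that $M_0^3=\mathbb{R}^3$, $M_c^3$ is a round sphere for $c>0$, and $M_c^3$ is hyperbolic space for $c<0$; let $(S^2,\rho)$ denote the given space, which by hypothesis is homeomorphic to the sphere and carries an intrinsic metric of curvature $\ge c$. The goal is to produce a convex body in $M_c^3$ whose boundary, with its induced intrinsic metric, is isometric to $(S^2,\rho)$.

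First I would treat \textbf{polyhedral} metrics, i.e.\ metrics that are locally isometric to $M_c^3$ away from finitely many cone points $p_1,\dots,p_n$ at which the total angle is $\le 2\pi$ (nonnegative concentrated curvature). Fix $n$ and consider two moduli spaces: the space $\mathcal{P}$ of convex polytopes in $M_c^3$ with $n$ vertices, taken modulo the isometry group of $M_c^3$, and the space $\mathcal{M}$ of polyhedral metrics of curvature $\ge c$ on $S^2$ with $n$ cone points of positive curvature. A standard combinatorial dimension count (vertex coordinates minus rigid motions on one side, triangulation edge-lengths and cone data subject to the Gauss--Bonnet and Euler constraints on the other) shows both to be manifolds of the same dimension $3n-6$. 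Let $f:\mathcal{P}\to\mathcal{M}$ assign to a polytope its intrinsic boundary metric. This map is continuous; it is injective because isometric convex polytopes in $M_c^3$ are congruent, which is the Cauchy rigidity theorem and its extension to space forms due to Pogorelov; hence by invariance of domain $f$ is an open map between manifolds of equal dimension. I would then prove $f$ is \emph{proper}: if the metrics $f(P_k)$ converge to a nondegenerate polyhedral metric, the polytopes $P_k$ have uniformly bounded (and bounded-below) diameters and bounded curvatures, so by the Blaschke selection theorem a subsequence converges to a convex polytope realizing the limit metric. Since $\mathcal{M}$ is connected (one can move, split, and merge cone points through admissible metrics) and $f(\mathcal{P})$ is nonempty, open, and closed, $f$ is surjective: every polyhedral metric of curvature $\ge c$ is realized.

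For the \textbf{general} case I would approximate $\rho$ by polyhedral metrics $\rho_k$ of curvature $\ge c$ with $\rho_k\to\rho$ uniformly; this uses the structure theory of two-dimensional spaces of bounded curvature, triangulating $(S^2,\rho)$ finely and replacing each curved triangle by its comparison triangle in $M_c^3$ in a way that keeps the angle defects nonnegative. By Step~1 each $\rho_k$ is realized by a convex body $F_k\subset M_c^3$. After normalizing positions so that a marked point is fixed, the bodies $F_k$ have uniformly bounded diameter (controlled by $\operatorname{diam}(\rho_k)$), so Blaschke selection yields a convergent subsequence $F_k\to F$ with $F$ convex and nondegenerate. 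The crucial analytic input is Alexandrov's convergence theorem for convex surfaces: Hausdorff convergence of convex bodies in $M_c^3$ forces convergence of the induced intrinsic metrics. Hence the intrinsic metric of $\partial F$ equals $\lim_k\rho_k=\rho$, and $(S^2,\rho)$ is isometric to the convex surface $\partial F$, as required.

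The step I expect to be the main obstacle is the properness and connectedness package inside the continuity argument of Step~1, where one must rule out degeneration of the polytopes (collapse to lower dimension, or escape to infinity in the $c<0$ case) purely from control of the intrinsic metric, and must verify that the space of admissible polyhedral metrics is connected so that the open-and-closed image fills it. A secondary difficulty is that the cases $c\neq0$ require genuine adaptations: for $c>0$ the realization lives in a round $3$-sphere and the metric must satisfy the accompanying global size constraint (diameter $\le \pi/\sqrt{c}$), while for $c<0$ one must control the noncompact isometry group of hyperbolic space in the normalization. Finally, the approximation in Step~2 must be carried out so that the curvature bound $\ge c$ is preserved exactly, which is where the comparison-triangle construction, rather than a naive piecewise-linear approximation, is essential.
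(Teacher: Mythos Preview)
The paper does not prove Theorem~A at all: it is quoted as a classical result of Alexandrov with a citation to \cite[p.~269]{Alex06} and is used as a black box in the proof of Theorem~\ref{main}. There is therefore no ``paper's own proof'' to compare your proposal against.

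That said, your outline is a faithful sketch of Alexandrov's original strategy (polyhedral realization via a continuity/invariance-of-domain argument, followed by approximation and Blaschke selection for the general case), and the difficulties you flag---properness/non-degeneration in the mapping argument, connectedness of the moduli space of admissible polyhedral metrics, and curvature-preserving approximation---are indeed the substantive points. For the purposes of this paper, however, none of this is needed: Theorem~A is simply invoked, so a citation suffices.
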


\begin{theoremB}\textnormal{ (A.~V.~Pogorelov~\cite[pp.~119-167, 267, 320-321]{Pog73},~\cite{Bor19})   } 
Closed isometric convex surfaces in three-dimensional Euclidean and spherical spaces are equal up to a rigid motion. 
 \end{theoremB}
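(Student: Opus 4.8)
The plan is to reduce the global rigidity statement to an infinitesimal rigidity statement and then to globalize it, treating first the regular case and afterwards the general convex case by approximation. Suppose $F_1$ and $F_2$ are closed convex surfaces, each bounding a convex body, together with a given isometry $\varphi\colon F_1\to F_2$ of their intrinsic metrics; the goal is to produce a rigid motion of the ambient space that realizes $\varphi$. The first reduction is to the \emph{infinitesimal} problem: I would consider a smooth one-parameter family $F_t$ of closed convex surfaces all isometric to $F_1$ and show that the velocity field $\vec{y}=\partial_t\vec{r}$ of any such deformation is trivial, i.e.\ is the restriction of a Killing field of the ambient space. Since an isometric deformation preserves the first fundamental form, $\vec{y}$ satisfies $d\vec{r}\cdot d\vec{y}=0$, which forces the existence of a rotation field $\vec{z}$ with $d\vec{y}=\vec{z}\times d\vec{r}$; the deformation is trivial precisely when $\vec{z}$ is constant.

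The key analytic step is a Herglotz--Blaschke type integral formula. Writing out the structure equations for the deformation and integrating by parts over the whole boundaryless surface, I would obtain an identity whose integrand is a quadratic expression in the rotation field $\vec{z}$, controlled by the second fundamental form $\mathrm{II}$, with boundary contributions that vanish because $F$ is closed. Since $F$ is convex, $\mathrm{II}$ is positive semidefinite, so the integrand has a fixed sign; the vanishing of the total integral then forces the tangential part $\vec{z}^{\,\top}$ to vanish identically, and using $K>0$ one concludes that $\vec{z}$ is constant. Hence every closed convex surface is infinitesimally rigid. In the spherical ambient space $S^{3}$ the same scheme applies once the curvature terms of $S^{3}$ are inserted into the structure equations, convexity again making the quadratic integrand sign-definite.

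From infinitesimal rigidity I would pass to global rigidity by a connectedness and properness argument: the set of closed convex surfaces realizing a fixed smooth metric of positive curvature, taken modulo rigid motions, is open (infinitesimal rigidity plus the implicit function theorem make each class isolated) and closed (Blaschke's selection theorem gives compactness, and a limit of isometric convex surfaces is again convex and isometric), and the space of admissible metrics is connected; this identifies $F_1$ and $F_2$ up to a rigid motion. The genuinely deep and hardest part is removing the regularity hypothesis, since a general convex surface may carry edges and conical points where neither $\mathrm{II}$ nor the deformation field is classically defined. Here I would invoke Pogorelov's regularity theorem---a convex surface whose intrinsic metric is $C^{k}$ of positive curvature is itself $C^{k}$---to reduce a smooth metric to the regular case above, and then approximate a general metric of curvature $\ge c$ by smooth ones, using the continuous dependence (stability) of the convex realization together with the uniqueness just proved and Blaschke selection to pass to the limit and conclude congruence. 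The main obstacle throughout is controlling the bending field and the integral identity across the singular set, which is exactly where Pogorelov's maximum principle for the Monge--Amp\`ere (Darboux) equation of isometric immersions, and his analysis of the set where two isometric surfaces fail to coincide, become indispensable.
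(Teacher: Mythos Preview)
The paper does not contain a proof of Theorem~B at all: it is quoted as a known external result and attributed to Pogorelov, with page references to his monograph. There is therefore nothing in the paper to compare your proposal against; Theorem~B is used as a black box in the proof of Theorem~\ref{main}.

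As for your sketch on its own merits, the broad strategy (Herglotz--Blaschke integral formula $\Rightarrow$ infinitesimal rigidity in the smooth convex case, Pogorelov regularity to promote smooth metrics to smooth surfaces, approximation and compactness for general convex surfaces) is a recognizable caricature of the classical route. One genuine gap, however, is your passage from infinitesimal rigidity to global rigidity by an ``open and closed'' continuity argument in the space of realizations: that argument establishes \emph{existence and local uniqueness} along a path of metrics, but it does not by itself show that two \emph{given} isometric convex surfaces $F_1,F_2$ can be connected by a path of isometric convex surfaces, which is what you would need. Pogorelov's actual proof of global rigidity does not proceed this way; in the smooth case it uses a direct integral identity (of Herglotz type) applied to the pair $F_1,F_2$ themselves (or to their mixed/mean surface), yielding congruence in one stroke, and in the general case it relies on his monotypy lemma and a delicate maximum-principle analysis of the set where the two surfaces differ. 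Your final paragraph gestures at this, but the middle step as written does not close the logical gap between ``every infinitesimal isometric deformation is trivial'' and ``any two isometric surfaces are congruent.''
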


\begin{theoremC}\textnormal{ (A.~D.~Milka~\cite{Milka80})  }
Closed isometric convex surfaces in three-dimensional Lobachevsky space are equal up to a rigid motion.
 \end{theoremC}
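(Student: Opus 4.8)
The plan is to prove congruence by showing that the two isometric closed convex surfaces carry the same extrinsic curvature data, after which the fundamental theorem of surface theory in a space of constant curvature $-1$ — the Gauss--Codazzi equations, with Gauss equation $K_{\mathrm{intr}} = \kappa_1\kappa_2 - 1$ — forces them to differ by an ambient isometry. Since a general convex surface need not be smooth, I would not assume regularity from the outset: following Pogorelov's extrinsic theory of convex surfaces one works with the Alexandrov intrinsic metric and reduces the general case to the analytic one, either by approximating the surfaces by smooth strictly convex ones and checking that all the comparison inequalities below are stable under the approximation, or by invoking Pogorelov's direct treatment of non-regular convex surfaces.

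The core is a hyperbolic analogue of Pogorelov's fundamental lemma on the difference of two isometric convex caps. In the Euclidean case Pogorelov places the two surfaces $F_1, F_2$ in a common $\mathbb{R}^3$, matches a point and tangent plane, and studies the position-vector difference $z = x_1 - x_2$ together with a maximum-principle/saddle dichotomy: convexity of both surfaces makes a suitable scalar built from $z$ behave, at an interior extremum, like the graph of a function with Hessian of a definite sign, which is incompatible with the isometry unless the normal curvatures in corresponding directions agree. To carry this to $H^3$ I would pass to the hyperboloid model $H^3 \subset \mathbb{R}^{3,1}$, where each surface is a position-vector-valued map constrained by $\langle x, x\rangle = -1$, so that a quasi-linear difference of the two immersions is again available and the Minkowski inner product supplies the comparison functions.

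With this set-up, the main step is to form, from the Minkowski products of the corresponding position and normal vectors of $F_1$ and $F_2$, a scalar comparison function on the common parameter domain, and to show, using the convexity of both surfaces together with the Gauss equation in curvature $-1$, that at an extremum of this function the second fundamental forms must coincide along every direction. This yields local coincidence of the two surfaces; connectedness of the set where they agree (which the lemma shows to be open, and which is trivially closed) then propagates coincidence to the whole surface, and tracing back the normalizing ambient isometry gives the asserted congruence.

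The hardest part is the hyperbolic version of Pogorelov's lemma itself. In $\mathbb{R}^3$ the flat structure makes $x_1 - x_2$ an honest map into a vector space on which the convex/saddle alternative is exact; in $H^3$ the constant curvature $-1$ enters the Gauss equation and the constraint $\langle x,x\rangle = -1$ destroys exact linearity, so the pointwise dichotomy must be re-derived with curvature error terms that have to be controlled and shown not to spoil the sign of the relevant Hessian. A secondary difficulty is making this reasoning uniform across the possible corner points and ridges of a general convex surface, which is exactly where Alexandrov's intrinsic-geometric machinery and the approximation argument must be invoked to keep the maximum-principle step valid in the non-smooth setting.
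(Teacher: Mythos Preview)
The paper does not contain a proof of Theorem~C. It is quoted as an external result due to A.~D.~Milka~\cite{Milka80} and is used as a black box in the proof of Theorem~\ref{main}, alongside the analogous Euclidean and spherical rigidity statements (Theorem~B, attributed to Pogorelov). There is therefore nothing in the paper to compare your proposal against.

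As for the proposal itself: your outline is a reasonable sketch of how one would expect a Pogorelov-type argument to be transported to $H^3$, and indeed Milka's proof proceeds in this spirit. But what you have written is a strategy, not a proof. The decisive step---the hyperbolic analogue of Pogorelov's fundamental lemma, with the correct comparison function built from Minkowski inner products and a rigorous maximum-principle argument that survives the curvature terms---is exactly the part you flag as ``hardest'' and leave undone. Likewise, the passage from smooth strictly convex surfaces to general closed convex surfaces is nontrivial (this is a substantial portion of Pogorelov's monograph in the Euclidean case) and cannot be waved through by ``approximation and stability'' without a concrete mechanism. If you intend to actually prove Theorem~C rather than cite it, you would need to supply both of these in full; otherwise, citing Milka as the paper does is the appropriate move.
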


\begin{theoremD}\textnormal{ (W.~Blaschke~\cite{Blaschke56}) }
 Let $\gamma$ be a closed embedded $C^2$ regular  curve in  Euclidean  plane.
 \begin{itemize}
\item[\textnormal{I.}] If  the curvature $k$ of $\gamma$   at each its point $P$ satisfies  
\begin{equation} k \ge \lambda > 0, \notag   \end{equation}
then the curve belongs to the disc that is bounded by the circle of the radius $R=1/ \lambda$ tangent to the curve at point $P$.
 \item[\textnormal{II.}]  If the curvature $k$  of $\gamma$  at each its point $P$ satisfies  
\begin{equation} 0 \le k \le  \lambda, \notag   \end{equation}
then the circle of the radius  $R=1/ \lambda$ tangent to the curve at the point P 
belongs to the domain $G$ that is bounded by the curve $\gamma$.
\end{itemize}
 \end{theoremD}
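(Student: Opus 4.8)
\section*{Proof proposal for Theorem D}

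The plan is to reduce both parts to one differential inequality for support functions. First I would record that in each case $\gamma$ bounds a convex region: in Part I the hypothesis $k \ge \lambda > 0$ makes the tangent direction turn strictly monotonically, so an embedded closed $C^2$ curve is convex; in Part II the hypothesis $k \ge 0$ together with embeddedness (total turning $2\pi$) gives convexity as well. Convexity lets me parametrize $\gamma$ by the angle $\phi$ of its outward unit normal $u(\phi) = (\cos\phi, \sin\phi)$ and introduce the support function $h(\phi)$, for which the radius of curvature is $\rho(\phi) = h(\phi) + h''(\phi)$ and the curvature is $k = 1/\rho$. Thus $k \ge \lambda$ is equivalent to $h + h'' \le R$, and $0 \le k \le \lambda$ to $h + h'' \ge R$, where $R = 1/\lambda$.

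Next I would compare $\gamma$ with the circle $C_P$ of radius $R$ tangent to $\gamma$ at $P = \gamma(\phi_0)$ on the concave side, whose centre is $O = \gamma(\phi_0) - R\,u(\phi_0)$. Its support function is $H(\phi) = \langle O, u(\phi)\rangle + R$, and a direct computation gives $H + H'' = R$ (constant radius of curvature). Setting $g = H - h$, the tangency at $P$ yields $g(\phi_0) = 0$ and $g'(\phi_0) = 0$, while $g + g'' = R - \rho$. Using the monotonicity $A \subseteq B \iff h_A \le h_B$ for convex sets, the two containment claims become sign statements for $g$: Part I (the curve lies in the disc bounded by $C_P$) is $g \ge 0$, and Part II ($C_P$ lies in $G$) is $g \le 0$.

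The heart of the argument is the elementary comparison for the operator $\tfrac{d^2}{d\phi^2} + 1$. With $F := g + g''$ and the initial data above, variation of parameters gives $g(\psi) = \int_{\phi_0}^{\psi} \sin(\psi - t)\,F(t)\,dt$. On $[\phi_0, \phi_0 + \pi]$ the kernel $\sin(\psi - t)$ is nonnegative, and a symmetric backward integration handles $[\phi_0 - \pi, \phi_0]$, so over the whole period the sign of $g$ is dictated by the sign of $F$. In Part I, $\rho \le R$ gives $F \ge 0$, hence $g \ge 0$; in Part II, $\rho \ge R$ gives $F \le 0$, hence $g \le 0$. This is precisely what is required.

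The main obstacle I anticipate is the degenerate case of Part II where $k$ vanishes on an arc: there the normal-angle parametrization is not injective and $h$ fails to be classically $C^2$, so $\rho = h + h''$ must be read as a measure carrying a nonnegative atom at each flat direction. I would verify that this only reinforces $F \le 0$ (the atom enters with a negative sign in $R - \rho$), so the inequality $g \le 0$ survives; alternatively one approximates $\gamma$ by strictly convex curves with $k \le \lambda$ and passes to the limit. The identity $H + H'' = R$ and the tangency conditions $g(\phi_0) = g'(\phi_0) = 0$ are routine, and I would not belabour them.
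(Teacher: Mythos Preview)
Your argument is correct. For Part~I it coincides with the paper's (i.e.\ Blaschke's) approach: support function, the identity $h+h''=\rho$ with $\rho\le R$, and the sine--kernel representation of the solution of $g''+g=F$ with vanishing initial data.

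For Part~II you diverge from the paper. You keep the support--function framework and deduce $g\le 0$ from $F=R-\rho\le 0$, while the paper (in its proof of the rectifiable version, Lemma~\ref{lemm_BL}) switches to the arc--length parametrization and the turning angle, writing $r'(s)=\cos\tau(s)\,e_1+\sin\tau(s)\,e_2$ with $\tau(s)\le\lambda s$, and compares this directly with the corresponding parametrization of the circle of radius $1/\lambda$. The trade--off is exactly the one you anticipated: the arc--length route is insensitive to the degeneracy $k=0$ (flat arcs) because $s$ is always a regular parameter, so no measure interpretation or approximation is needed; your route keeps Parts~I and~II formally parallel but forces you to handle the non--injectivity of the Gauss map by the extra step you sketch. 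Both are valid; the paper's choice simply avoids that detour.
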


The Theorem D is true if the condition for the curve's curvature  $k$  is substituted with
 the same condition for the specific curvature $ \frac{\tau(\gamma_1)}{s(\gamma_1)}$ for any  arc.

\begin{lemma}\label{lemm_BL}
 Let $\gamma$ be a closed embedded rectifiable curve in Euclidean plane. 
\begin{itemize}
\item[\textnormal{I.}] If for any subarc $\gamma_1$ of $\gamma$ 
the specific curvature $ \frac{\tau(\gamma_1)}{s(\gamma_1)}$ satisfies 
\begin{equation} \frac{\tau(\gamma_1)}{s(\gamma_1)} \ge \lambda > 0, \notag \end{equation}
then the curve $\gamma$ belongs to the disc that is bounded by the circle of the radius $R=1/ \lambda$
tangent to the support straight line at a point $P \in \gamma $ .
\item[\textnormal{II.}] If for any subarc $\gamma_1$ of $\gamma$ 
the specific curvature $ \frac{\tau(\gamma_1)}{s(\gamma_1)}$ satisfies 
\begin{equation} 0 \le \frac{\tau(\gamma_1)}{s(\gamma_1)} \le \lambda, \notag \end{equation}
then the circle of the radius $R=1/ \lambda$ tangent to the curve at a point P  
belongs to the domain $G$ that is bounded by the curve $\gamma$.
\end{itemize}
\end{lemma}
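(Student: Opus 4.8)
The plan is to reduce the rectifiable case to Blaschke's classical Theorem~D by a smoothing/approximation argument, exploiting the fact that the hypothesis is stated for \emph{all} subarcs simultaneously, which is a closed condition stable under the natural approximations. First I would recall the variational characterisation of the swerve $\tau(\gamma_1)$: for a convex curve in the plane the swerve of a subarc equals the angle turned by the support line (equivalently, the change of direction of the oriented tangent), and for a rectifiable convex curve this is a finite, monotone, additive function of the arc. Thus the map $s\mapsto \theta(s)$ sending arc length to turning angle is nondecreasing, and condition~I says $\theta(s_2)-\theta(s_1)\ge \lambda (s_2-s_1)$ for $s_1<s_2$, i.e. $\theta(s)-\lambda s$ is nondecreasing; condition~II says $\theta(s)-\lambda s$ is nonincreasing and $\theta$ itself nondecreasing.

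Next, for part~I, I would fix a point $P\in\gamma$ with a support line $\ell$, and parametrise $\gamma$ by the turning angle $\varphi\in[0,2\pi)$ measured from the direction of $\ell$ at $P$ (this is legitimate because $\gamma$ is convex and closed, so $\theta$ runs over a full period of length $2\pi$). The position vector of $\gamma$ can then be written as a Stieltjes integral
\begin{equation}
X(\varphi)=P+\int_0^{\varphi} e(\psi)\,\frac{ds}{d\theta}(\psi)\,d\psi,\qquad e(\psi)=(\cos\psi,\sin\psi),\notag
\end{equation}
where $ds/d\theta$ is a (possibly atomic) nonnegative measure with $ds/d\theta\le 1/\lambda$ by the hypothesis $\tau/s\ge\lambda$ applied to shrinking subarcs. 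I would then compare $X(\varphi)$ with the circle $C$ of radius $R=1/\lambda$ through $P$ tangent to $\ell$, whose position vector is $P+\int_0^\varphi e(\psi)\,R\,d\psi$; subtracting, the support function (height above $\ell$, and more generally over every line) of $\gamma$ at turning angle $\varphi$ is dominated by that of $C$, because replacing the measure $ds/d\theta$ by the larger constant $R$ only increases the relevant projections. A clean way to package this: show that for every direction $u$, $\langle X(\varphi)-P,u\rangle\le \langle \text{(point of }C\text{ at angle }\varphi)-P,u\rangle$ when $\langle e(0),u\rangle\ge 0$, which forces $\gamma\subset \overline{D(C)}$. Part~II is dual: the hypothesis $ds/d\theta\ge 1/\lambda$ makes $\gamma$ "turn more slowly" than $C$, so $C$ is contained in $G$; here I also use the assumption $\tau/s\ge 0$ to guarantee $\gamma$ is convex (so the parametrisation by $\theta$ is still valid) and that $\gamma$ encircles the tangent circle once.

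The main obstacle I expect is the careful treatment of corner points and of the Stieltjes-integral bookkeeping: at a corner the measure $ds/d\theta$ has an atom of size $0$ in arc length but positive mass in angle, which is exactly the "good" direction for part~I (it only helps the inclusion $\gamma\subset\overline{D}$) but must be handled with care in part~II, where a corner of $\gamma$ could a priori push the boundary inward toward $C$. I would resolve this by an explicit approximation: smooth $\gamma$ by a sequence $\gamma_n$ of $C^2$ convex curves obtained by rounding corners and mollifying, chosen so that the specific curvature bounds are preserved in the limit (for part~I this is automatic since rounding a corner with a circular arc of radius $1/\lambda$ keeps curvature $\ge\lambda$; for part~II one rounds with radius $1/\lambda$ from outside). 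Apply Theorem~D to each $\gamma_n$ and pass to the limit in the Hausdorff metric, noting that the tangent circles at $P_n\to P$ converge to the tangent circle at $P$. The alternative, fully self-contained route is the direct Stieltjes estimate sketched above, which avoids approximation entirely but requires proving the monotone-support-function comparison by hand; I would present whichever is shorter, most likely the approximation argument, and remark that the paper's preceding sentence ("Theorem~D is true if $k$ is replaced by the specific curvature for any arc") is precisely the content being justified here.
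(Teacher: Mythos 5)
Your overall plan is close in spirit to the paper's proof (the paper shows that under hypothesis I the support function is $C^{1,1}$ and satisfies $h+h''=R$ with $0\le R\le 1/\lambda$ a.e., and then invokes Blaschke's support-function argument; for II it parametrizes by arc length, notes $\tau(s)\le\lambda s$, and compares $r'(s)=\cos\tau(s)\,e_1+\sin\tau(s)\,e_2$ with the circle), but your justification of the decisive step in part I has a genuine gap. It is not true that replacing the measure $ds/d\theta$ by the larger constant $1/\lambda$ ``only increases the relevant projections'', nor is the packaged claim $\langle X(\varphi)-P,u\rangle\le\langle X_C(\varphi)-P,u\rangle$ for $\langle e(0),u\rangle\ge 0$ correct: once the turning exceeds $\pi$ the integrand $\langle e(\psi),u\rangle$ changes sign, so the integral is no longer monotone in the measure, and the pointwise comparison at equal turning angle simply fails. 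For example, take for $\gamma$ a lens bounded by two arcs of radius $1/\lambda$ and $u=e(0)$: because of the corner, the point of $\gamma$ with turning angle $\pi$ lies at distance comparable to $1/\lambda$ from $P$ in the direction $u$, whereas the point of the comparison circle with turning angle $\pi$ is the antipode of $P$ and has projection $0$. The range of turning angles in $(\pi,2\pi)$ is exactly the nontrivial half of Blaschke's rolling theorem, and your sketch supplies no argument for it; the paper avoids the issue by passing to the support function, writing $h(\phi)=\int_0^\phi R(\sigma)\sin(\phi-\sigma)\,d\sigma$, and quoting Blaschke's proof, which goes through verbatim once $0\le R\le 1/\lambda$ a.e. is established.

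Your fallback approximation route is also not yet a proof as described. A rectifiable curve satisfying the hypotheses need not be piecewise regular: in case I the turn can have countably many atoms (corners) and a singular continuous part, so ``rounding corners and mollifying'' does not produce the required $C^2$ approximants with curvature bounds; the workable version is to mollify the support function (equivalently the length measure in the normal-angle parametrization), under which the bounds $R\le 1/\lambda$, respectively $R\ge 1/\lambda$, are preserved, together with a small outer parallel curve and a limit $\lambda_\varepsilon\to\lambda$ in case I to ensure genuine $C^2$ regularity. Note also that in case II the hypothesis already excludes corners (a corner gives subarcs with $\tau/s\to\infty$), so your remark about rounding corners ``with radius $1/\lambda$ from outside'' addresses a non-issue; the actual obstruction there is that $\gamma$ may contain segments and need not have pointwise curvature at all. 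In short, both of your routes rest on steps that are asserted rather than proved, and the first one is false as stated.
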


\begin{proof}
I. In this case the support function $h(\phi)$, $0\le \phi \le 2\pi$ of the curve $\gamma$ is $C^{1,1}$ regular 
and a.e. it satisfies the equation
\begin{equation} h+h'' = R, \;\; 0\le R \le \frac{1}{\lambda},\notag \end{equation}
where $R$ is a radius of curvature for $\gamma$. Therefore 
\begin{equation} h(\phi) = \int\limits_0^\phi R(\sigma)\sin(\phi-\sigma)\,d\sigma \notag \end{equation}
and the proof coincides with Blaschke proof~\cite{Blaschke56}. \\
II. The radius-vector $r(s)$ of the curve $\gamma$ is $C^{1,1}$ regular vector function. 
Fix the initial point $P_0$ on $ \gamma$ and denote by $e_1$ the unit tangent vector of $\gamma$ at $P_0$ and 
  by $e_2$ the unit normal vector of $\gamma$ at $P_0$.
Let $P(s)$ be the point on $ \gamma$ such that the length of the arc $\gamma(s)=P_0P(s)$ equals to $s$.
The function $\tau(s) = \tau\left(\gamma(s)\right)$ is the integral geodesic curvature of the arc $\gamma(s)$ and 
$\tau(s) \le \lambda s$. Therefore
\begin{equation} \label{r_prime}
r'(s) = \cos \tau(s) \, e_1 + \sin \tau(s) \, e_2. 
 \end{equation}
If we compare (\ref{r_prime}) with the equation for the circle of radius $\frac{1}{\lambda}$,  we obtain the proof.
\end{proof}

 H.  Karcher proved the generalization of Blaschke theorem in spherical space $\mathbb{S}^2$
  and in Lobachevsky space $\mathbb{H}^2$  for regular curves~\cite{Karcher68}.
 We formulate Lemma \ref{spher_curve} for the case when the curvature $\mathbb{S}^2$ equals to $1$ and the curvature 
  $\mathbb{H}^2$  is equal to $-1$. The Lemma  \ref{spher_curve} is true for the planes of any constant curvature $c$ 
  and the proof is the same. 
  
 \begin{lemma}\label{spher_curve}
Let $\gamma$ be a closed embedded rectifiable curve in $\mathbb{H}^2$ or $\mathbb{S}^2$.
\begin{itemize}
\item[\textnormal{I.}] If the specific curvature satisfies 
\begin{equation} \frac{\tau(\gamma_1)}{s(\gamma_1)} \ge \coth R_0 = \lambda, \notag \end{equation}
for any subarc $\gamma_1$ of $\gamma$ in $\mathbb{H}^2$, then the curve $\gamma$ belongs to the disc 
that is bounded by the circle of radius $R_0$  tangent to the support straight line of  $\gamma$ at a point $P \in \gamma$. 
\item[\textnormal{II.}] If the specific curvature satisfies 
\begin{equation} \frac{\tau(\gamma_1)}{s(\gamma_1)} \ge \cot R_0 = \lambda, \notag \end{equation}
for any subarc $\gamma_1$ of $\gamma$ in $\mathbb{S}^2$, then the curve $\gamma$ belongs to the disc  
that is bounded by the circle of radius $R_0$  tangent to the support straight line of  $\gamma$ at a point $P \in \gamma$.
\end{itemize}
 \end{lemma}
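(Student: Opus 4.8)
The plan is to run the proof of Lemma~\ref{lemm_BL}.I inside a surface of constant curvature $c$; this is Karcher's argument for regular curves~\cite{Karcher68}, and the only additional work is its low-regularity version. By rescaling the metric it suffices to treat $c=-1$ ($\mathbb{H}^{2}$, $\lambda=\coth R_{0}$) and $c=+1$ ($\mathbb{S}^{2}$, $\lambda=\cot R_{0}$). The hypothesis $\tau(\gamma_{1})/s(\gamma_{1})\ge\lambda>0$ forces $\gamma$ to be convex, hence to bound a convex disc $G$ and to have a support geodesic at each of its points; fix a point $P\in\gamma$ at which $\gamma$ has a unique tangent (all but countably many points), let $\ell$ be the support geodesic of $G$ there, and let $C_{R_{0}}$ be the circle of constant geodesic curvature $\lambda$ (equivalently, of geodesic radius $R_{0}$) tangent to $\ell$ at $P$ on the $G$-side, bounding a disc $D$. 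We must show $\gamma\subset\overline{D}$.

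I would argue exactly as in Lemma~\ref{lemm_BL}.I. Let $h(\phi)$, $\phi\in[0,2\pi]$, be the support function of $G$, parametrized by the direction $\phi$ of the support geodesic. It is of class $C^{1,1}$: it is $C^{1}$ because $\lambda$-convexity forbids geodesic segments in $\gamma$, and $h'$ is Lipschitz because the generalized radius of curvature $R$ of $\gamma$ satisfies $0\le R\le R_{*}$, where $R_{*}$ is the corresponding quantity for $C_{R_{0}}$ (corner points of $\gamma$ are exactly the directions where $R=0$; geodesic segments, which are excluded, would force $R=\infty$). In the model space $h$ satisfies a.e. the constant-curvature analogue of the equation $h+h''=R$ used in the proof of Lemma~\ref{lemm_BL}, namely a linear second-order ODE with constant coefficients and inhomogeneous term $R$. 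Placing the base point at the center $O$ of $C_{R_{0}}$, so that $h$ and the support function $h_{C}$ of $C_{R_{0}}$ agree to first order at $\phi=0$, the difference $v:=h_{C}-h$ solves the same ODE with nonnegative source $R_{*}-R$, satisfies $v(0)=v'(0)=0$, and is $2\pi$-periodic. The Green's-function representation of $v$, combined with the periodicity (closure) relations $\int_{0}^{2\pi}(R_{*}-R)\cos\phi\,d\phi=\int_{0}^{2\pi}(R_{*}-R)\sin\phi\,d\phi=0$, forces $v\ge0$ on $[0,2\pi]$, i.e. $h\le h_{C}$, i.e. $\gamma\subset\overline{D}$; cases I and II are $c=-1$ and $c=+1$. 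Equivalently one may compare $\rho(s)=\operatorname{dist}(O,\gamma(s))$ with $R_{0}$ via the Riccati relation $\dot\beta=\dot\tau(s)-\mathrm{ct}_{c}(\rho)\sin\beta$ for the angle $\beta$ between the unit tangent and $\partial_{\rho}$ in geodesic polar coordinates about $O$, using $\dot\tau\ge\lambda=\mathrm{ct}_{c}(R_{0})$ and the monotonicity of $\mathrm{ct}_{c}$, but the support-function form makes the global character of the conclusion automatic.

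The part I expect to require genuine work is setting up the curved support function and verifying that it obeys a constant-coefficient second-order ODE whose Green's function has the right sign for the Blaschke estimate — this is precisely Karcher's contribution — and then checking that his reasoning survives when $R$ is only an $L^{\infty}$ function rather than continuous (the geodesic curvature of $\gamma$ is a measure, and $\gamma$ may have corners): since the argument uses only the Green's representation of $v$ and the sign of the source, the passage to $R\in L^{\infty}$ is harmless, and the $C^{1,1}$ regularity of $h$ is exactly what legitimizes the a.e. equation. One could also bypass the regularity issue by rounding each corner of $\gamma$ into a short arc of specific curvature $\ge\lambda$, invoking Karcher's regular-curve result, and letting the roundings shrink, since the tangent circle $C_{R_{0}}$ depends continuously on the data. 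The rest is bookkeeping already carried out in the proof of Lemma~\ref{lemm_BL}.
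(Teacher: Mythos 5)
There is a genuine gap, and it sits exactly at the step you flag as ``Karcher's contribution.'' Your central analytic claim --- that the support function $h(\phi)$ of a convex curve in $\mathbb{H}^2$ or $\mathbb{S}^2$ satisfies a.e.\ a \emph{constant-coefficient} linear second-order ODE whose inhomogeneous term is the radius of curvature $R$ of $\gamma$ --- is false. A quick test: take for the ``body'' a single point at spherical distance $d$ from the chosen origin; its support function satisfies $\tan h(\phi)=\tan d\,\cos(\phi-\phi_0)$, so $h+h''\neq 0$ although the ``radius of curvature'' vanishes identically, and no inhomogeneity depending only on the curvature can repair this (it would have to depend on $h,h'$ as well). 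The linear constant-coefficient structure appears only after the gnomonic (resp.\ Beltrami--Klein) substitution $g=\tan h$ (resp.\ $g=\tanh h$), and then the source term $g+g''$ is the \emph{Euclidean} radius of curvature of the projected curve, not the intrinsic one; the genuine relation is $R=(g+g'')\bigl(1-\tfrac{(g')^2}{1+g^2}\bigr)^{-3/2}$. So the real work is the comparison between the curvature of $\gamma$ and that of its projective image, and this is precisely where the two cases diverge: in $\mathbb{H}^2$ the Klein map moves the curvature in the favorable direction, so the Euclidean Lemma~\ref{lemm_BL}.I applies directly to the image, whereas in $\mathbb{S}^2$ it moves it the \emph{wrong} way, which is why the paper first passes to the polar curve (whose curvature is $\le\tan R_0$) and invokes Lemma~\ref{lemm_BL}.II before dualizing back. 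Your Green's-function argument, with the Euclidean closure relations $\int(R_*-R)\cos\phi\,d\phi=\int(R_*-R)\sin\phi\,d\phi=0$ imported verbatim, silently assumes all of this away.

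Your two fallback sketches do not close the gap as written. The Riccati comparison for $\beta(s)$ in polar coordinates about $O$ gives local information but, as stated, no global conclusion (at the farthest point of $\gamma$ from $O$ the inequality $\dot\tau\ge\lambda$ produces no contradiction by itself; Blaschke-type inclusion theorems are genuinely global and use the closedness and the tangency at $P$). The smoothing route --- round the corners keeping specific curvature $\ge\lambda$, apply Karcher's regular-curve theorem, pass to the limit --- is plausible and would give an alternative to the paper's argument, but you must actually construct smoothings that stay embedded, preserve $\lambda$-convexity and (approximately) the tangency data at $P$, meet Karcher's $C^2$ hypothesis, and then justify convergence of the tangent discs; none of this is carried out. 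By contrast, the paper avoids all regularity issues in the curved setting by reducing, via the geodesic maps and (on the sphere) polarity, to the low-regularity Euclidean statement already established in Lemma~\ref{lemm_BL}.
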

 
 \begin{proof}
The curve $\gamma$ is a closed convex curve. 
At any point $P $ of $\gamma$ there exists a support straight line (geodesic line in the plane of constant curvature).

 I. $\gamma \in \mathbb{H}^2$. 
Let $S$ be a circle of radius $R_0$ tangent to the support straight line of  $\gamma$  from the side containing  $\gamma$.
Assume that the center of the circle $S$ is the origin of the coordinate system in the Cayley-Klein model of Lobachevsky plane
and also it is the origin for support function $h$ of curve $\gamma$. 
The support function $h$ is $C^{1,1}$ regular and a.e. the radius of   curvature  $R$ of $\gamma$  equals
\begin{equation}   
R = \frac{ g + g''}{ \left( 1-  \frac{ (g')^2}{1+g^2}  \right)^{3/2}  }, 
\end{equation}
where $g(h) = \tanh h$ is the support function for the curve $\widetilde \gamma$, 
and $\widetilde \gamma$ is the image of $\gamma$ under the geodesic map $\mathbb{H}^2$ into $\mathbb{E}^2$~\cite{BorDrach15}~\cite{Drach14}. 
The radius of curvature $\widetilde R$ of  $\widetilde \gamma$  is a.e. equal to
\begin{equation}  
\widetilde R = R \left( 1-  \frac{ (g')^2}{1+g^2}  \right)^{3/2}, \, 0 \le \tilde R  \le R. 
\end{equation}
 The image of the circle $S$ under the geodesic map is the circle $\widetilde S$ in Euclidean plane $\mathbb{E}^2$ 
with the center in the origin of Cartesian orthogonal coordinate system.
The curvature of $\widetilde S$ equals to $\coth R_0$. 
From Lemma \ref{lemm_BL} (I.) it follows that $\widetilde \gamma$ belongs to the disc being bounded by the circle $\widetilde S$. 
Appling the inverse geodesic transformation, 
we obtain that the curve $\gamma$ belongs to the disc that is bounded by the circle $S$ in Lobachevsky plane $\mathbb{H}^2$.

II.  $\gamma \in \mathbb{S}^2$.
Let $\overline \gamma$ be the polar to $\gamma$ curve  in $\mathbb{S}^2$. 
The radius vector of $\overline \gamma$ is $C^{1,1}$ regular and its curvature is  $\le \tan R_0$ a.e.
Let $ P_0$ be a point on $  \gamma$ and 
$\overline S$ be a circle of the radius $\pi/2-R_0$ tangent to $\overline \gamma$ at the point $\overline P_0$. 
The  curvature of this circle is equal to $\tan R_0$. 
The center $\overline O$ of $\overline S$ is the south pole of the sphere. 
 Consider the geodesic map of the sphere $\mathbb{S}^2$ into the plane tangent to $\mathbb{S}^2$ at the point $\overline O$.
The curve $\overline \gamma$ is mapped to the curve $\widetilde {\overline \gamma} \in \mathbb{E}^2$ and
the circle $\overline S$ is maped into the circle $\widetilde {\overline S}$ of the curvature $\tan R_0$. 
The curvature $\widetilde {\overline k} (\widetilde {\overline \gamma}) \le \overline k (\overline \gamma) \le \tan R_0$.
From Lemma \ref{lemm_BL} (II.) it follows that
 the circle $\widetilde {\overline S}$ belongs to the domain that is bounded by the curve $\widetilde {\overline \gamma}$. 
Appling the inverse geodesic transformation, we obtain that
 the circle $\overline S$ belongs to the domain bounded by $\overline \gamma$
  and the polar curve $\gamma$ belongs to the disc bounded by the polar circle $S$ of the radius $R_0$. 
\end{proof}

\textit{Proof of the Theorem \ref{main}}.
 Let $G_1$ and $G_2$ be two copies of the domain $G$. 
Let us glue the domains $G_1$ and $G_2$ along their boundary curves $\gamma_1$ and $\gamma_2$
 by isometry between these curves. 
We obtain a manifold $F$ homeomorphic to the two-dimensional sphere with the intrinsic metric. 
Since the sum of the integral geodesic curvatures of any two identified arcs of the boundary curves is non-negative, 
from the Alexandrov gluing theorem \cite[p. 318]{Alex06} it follows that $F$ is Alexandrov space of curvature $\ge c$.
By Theorem A this manifold can be isometrically embedded as a closed convex surface $F_1$ 
in the simply-connected space $M^3(c)$ of constant curvature $c$. 
From Theorem B and C it follows that up to the rigid motion this surface is unique.

By \textit{plane domains} we will understand domains on totally-geodesic two-dimensional surfaces
in spaces of constant curvature; similarly we will call geodesic lines in these spaces as \textit{lines}. 

 Perform the reflection of the surface $F_1$ with respect to a plane $\pi$ passing through three points on $\gamma$ 
that do not belong to a line. 
We will get the mirrored surface $F_2$. 
The domains $G_1$ and $G_2$ are mapped to the domains $\widetilde G_1$ and $\widetilde G_2$ on $F_2$; 
the curve $\gamma$ is mapped to $\widetilde \gamma$. 
But $G_1$ is isometric to $G_2$ and $\widetilde G_2$ is isometric to $\widetilde G_1$.
Let us reverse the orientation of the domains $\widetilde G_1$, $\widetilde G_2$.
Then the surface $F_2$ will be isometric to $F_1$ and they will have the same orientation. 
By Theorems B and C the surface $F_1$ can be mapped into the surface $F_2$ by a rigid motion of the ambient space. 
But the three points of the curve $\gamma$ are fixed under this rigid motion.
Thus it follows that this motion is the identity mapping and, moreover, 
the curve $\gamma$ coincides with the curve $\widetilde \gamma$.
Such situation is possible only when the curve $\gamma$ is a plane curve and
 it is the boundary of a convex cup isometric to the domain $G$. 
Recall that the \textit{convex cup} is a convex surface with a plane boundary $\gamma$ such that the surface is a graph 
over a plane domain $\overline G$ enclose by $\gamma$. 
Note that, since $\gamma$ is a convex curve on the plane, 
then the integral geodesic curvature of any arc of the curve $\gamma$ is 
non-negative viewed both as a curve on the cup and as a curve on a plane \cite{Bor17}.

 Let us show that the integral geodesic curvature of any arc of $\gamma$ calculated on $G$ is not less than
  the corresponding integral geodesic curvature of it that is calculated on the cup $G$.
This means that $\gamma$ as a boundary curve of $\overline G$ is also $\lambda$-convex.

Recall that the intristic curvature $\omega(D)$ of a Borel set $D$ on a convex surface in a space of constant curvature $c$ is
\begin{equation}
\omega(D) = \psi(D) + c F(D), \notag
\end{equation}
where $\psi(D)$ is the extrinsic curvature, 
$F(D)$ is the area of $D$ \cite[p. 397]{Alex06}.
Consider a closed convex surface $M$ bounded by $G$ and the plane domain $\overline G$, and
a surface $\overline M$ made up from the double-covered domain $\overline G$.

The intrinsic curvature concentrated on $\gamma$ equals 
\begin{equation}
\omega(\gamma) = \tau_\gamma(G)+\tau_\gamma(\overline G), \notag
\end{equation}
where $ \tau_\gamma(G)$, $\tau_\gamma(\overline G)$ are the integral geodesic curvatures of $\gamma$ computed in $G$ 
and $\overline G$ respectively.

 Since $F(\gamma)=0$, we have
 \begin{equation}
\psi_M(\gamma) = \tau_\gamma(G)+\tau_\gamma(\overline G), \notag
\end{equation}
\begin{equation}
\psi_{\overline M}(\gamma)= 2\tau_\gamma(\overline G). \notag
\end{equation}
 
 From the definition of the extrinsic curvature \cite[p. 398]{Alex06} it follows that 
 $\psi_{\overline M}(\gamma) \ge \psi_{M}(\gamma)$ because each plane supporting to $M$ at a point of $\gamma$ is
 also supporting to $\overline M$.
 Thus we obtain $\tau_\gamma(\overline G) \ge \tau_\gamma(G)$. 
 Moreover, this inequality holds for any subarc of $\gamma$ as well.

I. The curve $\gamma$ is a $\lambda$-convex curve lying in the plane of constant curvature $c$.
From Lemmas \ref{lemm_BL} and \ref{spher_curve} it follows that 
the curve $\gamma$ belongs to the  disc bounded by the circle of radius $R_0$. 
The curvature and the length $s$ of these circle equals
 \begin{enumerate}
 \item  for $c = 0$, $\lambda = \frac{1}{R_0}$,  $s = 2\pi R_0$; 
 \item for $c > 0$,    $\lambda = \sqrt{c}\cot \sqrt{c} R_0$,  $s = 2\pi \sin \sqrt{c} R_0$;
 \item for $c < 0$,   $\lambda = \sqrt{-c} \coth \sqrt{-c} R_0$, $ s = 2\pi \sinh \sqrt{-c} R_0$.
 \end{enumerate}
The curve $\gamma$ on the plane of constant curvature $c$ bounds the convex domain $G$.
 It follows that the length of $\gamma$ satisfies 
  \begin{align}\label{s_gamma}
   s(\gamma) &\le
  \begin{cases}
   \frac{2\pi}{\lambda}       & \text{if } c = 0 \\
  \frac{2\pi \sqrt{c}}{ \sqrt{c + \lambda^2}}       & \text{if } c > 0 \\
   \frac{2\pi \sqrt{-c}}{ \sqrt{c + \lambda^2}}      & \text{if } c < 0  
  \end{cases}
   \end{align}
 
 II. Suppose that there is equality in (\ref{s_gamma}). 
Then the domain $\overline G$ is a disc bounded by the circle $\gamma$. 
Furthermore, $\tau_\gamma(\overline G) = \tau_\gamma(G)$ and 
the intrinsic curvature of $\gamma$ satisfies 
$\omega_M(\gamma) = \omega_{\overline M}(\gamma) = 2\tau_\gamma(\overline G)$
and the extrinsic curvature for any subarc $\gamma_1$ of $\gamma$ satisfies 
\begin{equation}\label{extr_curv}
\psi_M(\gamma) = \psi_{\overline M}(\gamma).
\end{equation}

 It follows that the surface $M$ and $\overline M$ coincide, $M$ is the double-covered disk and then $G$ is a disk.
 If $M$ doesn't coincide with $\overline M$ then there exists  the set of  a positive measure 
 of supporting planes to $\overline M$ along $\gamma$, that  are not planes of support to $M$.
It follows that the extrinsic curvatures of $M$ and  $\overline M$ along $\gamma$ don't coincides.
This contradicts the equality (\ref{extr_curv}). The Theorem \ref{main} is proved. 


   B.Verkin Institute for Low
Temperature Physics and Engineering of the National Academy of Sciences of Ukraine,
Kharkiv, 61103, Ukraine\\
\textsc {  \textit  {E-mail address: } }  aborisenk@gmail.com

\end{document}